\newtheorem{thm}{Theorem}[section]
\newtheorem{lem}[thm]{Lemma}
\theoremstyle{definition}
\theoremstyle{question}
\theoremstyle{Conjecture}
\numberwithin{equation}{section}
\begin{document}

\title[on non-commuting sets and centralizers]
{on non-commuting sets and centralizers in infinite group}%
\author{Mohammad Zarrin}%

\address{Department of Mathematics, University of Kurdistan, P.O. Box: 416, Sanandaj, Iran}%
 \email{m.zarrin@uok.ac.ir, zarrin@ipm.ir}
\begin{abstract}
 A subset $X$ of a group $G$ is a set of pairwise non-commuting elements if $ab\neq ba$ for
any two distinct elements $a$ and $b$ in $X$. If $|X|\geq |Y|$ for
any other set of pairwise non-commuting elements $Y$ in $G$, then
$X$ is said to be a maximal subset of pairwise non-commuting
elements and the cardinality of such a subset is denoted by
$\omega(G)$. In this paper, among other thing, we prove that, for
each positive integer $n$,
there are only finitely many groups $G$, up to isoclinic, with $\omega(G)=n$ (with exactly $n$ centralizers).   \\\\
 {\bf Keywords}.
 Pairwise non-commuting elements of a group; Isoclinic groups; n-centralizers.\\
{\bf Mathematics Subject Classification (2000)}. 20D60; 20F99.
\end{abstract}
\maketitle

\section{\textbf{ Introduction and results}}

Let $G$ be a non-abelian group. We call a subset $X$ of $G$ a set
of pairwise non-commuting elements if $ab\neq ba$ for any two
distinct elements $a$ and $b$ in $X$. If $|X|\geq |Y|$ for any
other set of pairwise non-commuting elements $Y$ in $G$, then $X$
is said to be a maximal subset of pairwise non-commuting elements
and the cardinality of such a subset is called the clique number
of $G$ and and it is denoted by $\omega(G)$. By a famous result of
Neumann \cite{Neu} answering a question of ErdÄos, we know that
the finiteness of $\omega(G)$ in $G$ is equivalent to the
finiteness of the factor group $G/Z(G)$, where $Z(G)$ is the
center of $G$. Moreover,  Pyber \cite{Pyb} showed that $\omega(G)$
is also related to the index of the center of $G$. In fact, he
proved that there is some constant $c$ such that $[G:Z(G)]\leq
c^{\omega(G)}$. The clique number of groups was investigated by
many authors, for instance see \cite{Zar,Ber,Chi}.

It is easy to see that, if $H$ is an arbitrary abelian group and
$G$ is a group with $\omega(G)=n$, then  $\omega(G\times H)=n$.
Therefore, there can be infinitely many groups $K$ with
$\omega(K)=n$.  In this paper, by using a notion of isoclinic
groups (\cite{Hal}), first we show that the cardinality
 of maximal subset of
 pairwise non-commuting elements of any two isoclinic groups are that same (see Lemma \ref{l1}, below). Next, by this result, we show that,
 for each positive integer $n$,
there are only finitely many groups $G$, up to isoclinic, with
$\omega(G)=n$. Clearly, the relation isoclinic is an equivalence
relation on any family of groups and any two abelian groups are
isoclinic.\\

Our main results are.

\begin{thm}
Let  $n$ be a positive integer and $G$ be an arbitrary group such
that $\omega(G)=n$. Then
\begin{enumerate}
 \item There are only finitely many
groups $H$, up to isoclinic, with $\omega(H)=n$.\\ \item There
exists a finite group $K$ such that $K$ is isoclinic to $G$ and
$\omega(G)=\omega(K)$.
\end{enumerate}
\end{thm}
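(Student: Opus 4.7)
The plan is to reduce both statements to three classical ingredients: the Neumann--Pyber bound $[G:Z(G)]\leq c^{\omega(G)}$ quoted in the introduction; Schur's theorem that finiteness of $G/Z(G)$ forces $[G,G]$ to be finite, with order bounded by a function of $[G:Z(G)]$ alone; and P.~Hall's construction that every group is isoclinic to a \emph{stem group} $G^{*}$, i.e.\ one with $Z(G^{*})\leq [G^{*},G^{*}]$. Together with Lemma~\ref{l1}, which already tells us that $\omega$ is an isoclinism invariant, these ingredients should make the theorem essentially a bookkeeping exercise.

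First I would prove (2). Starting from $\omega(G)=n$, Pyber's inequality gives $[G:Z(G)]\leq c^{n}$, and then Schur's theorem yields a bound $|[G,G]|\leq M(n)$ depending only on $n$. Choose an isoclinic stem group $K$. Because an isoclinism supplies an isomorphism $\beta\colon [G,G]\to [K,K]$, we still have $|[K,K]|\leq M(n)$; since $K$ is stem, $Z(K)\leq [K,K]$, so $|Z(K)|\leq M(n)$. Hence $|K|=|Z(K)|\cdot[K:Z(K)]\leq M(n)\cdot c^{n}$, a finite bound depending only on $n$. Applying Lemma~\ref{l1} to the isoclinism $G\sim K$ gives $\omega(K)=\omega(G)=n$, which is exactly (2).

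For (1), the same calculation yields a uniform bound $B(n):=M(n)\cdot c^{n}$ on the order of a stem representative of any isoclinism class containing a group $H$ with $\omega(H)=n$: first use Lemma~\ref{l1} to transfer the hypothesis $\omega=n$ from $H$ to its stem group $H^{*}$, then run the argument above on $H^{*}$. Since there are only finitely many finite groups of order at most $B(n)$ up to isomorphism, only finitely many isoclinism classes can contain a group with $\omega=n$, and Lemma~\ref{l1} guarantees every such $H$ lies in one of them.

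The only delicate point I foresee is the appeal to Hall's stem construction for an arbitrary, possibly infinite $G$: one needs the existence of a stem group in the isoclinism class of $G$ and the fact that the accompanying $\beta$ preserves the isomorphism type of the derived subgroup. Both are standard once one knows $G/Z(G)$ and $[G,G]$ are finite (which we do, thanks to Neumann and Schur), so I would simply state these facts with precise references rather than rederive them. Beyond that, the proof is just the two short calculations above.
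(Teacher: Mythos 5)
Your proof is correct, and for part (2) it is essentially the paper's own argument: pass to a stem group $K$ isoclinic to $G$ via Hall's theorem, note that $K/Z(K)\cong G/Z(G)$ is finite by Pyber, that the derived subgroup is finite by Schur, and that $Z(K)\leq K'$ then forces $K$ itself to be finite, after which Lemma \ref{l1} transfers $\omega$. The genuine difference is in part (1). The paper does not use stem groups there at all: it observes that there are only finitely many isomorphism types for $G/Z(G)$ (order at most $f(n)$ by Pyber) and for $G'$ (order bounded by a function of $n$ via the quantitative Schur theorem), and only finitely many commutator maps $G/Z(G)\times G/Z(G)\to G'$, so that the triple consisting of the central quotient, the derived subgroup, and the commutator map leaves only finitely many isoclinism types. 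You instead make part (2) quantitative --- every isoclinism class meeting $\{H:\omega(H)=n\}$ contains a stem group of order at most $B(n)=M(n)\cdot c^{n}$ --- and then count isomorphism classes of groups of order at most $B(n)$. Both routes are valid: yours derives (1) as a corollary of the effective form of (2) and sidesteps the (slightly glossed-over) claim that the commutator-map triple classifies isoclinism types, while the paper's route for (1) avoids invoking Hall's stem construction, reserving it for part (2). The one delicate point you flag, the existence of a stem group isoclinic to an arbitrary, possibly infinite, group, is exactly what the paper cites from Hall, so nothing essential is missing from your argument.
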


By this results, we give a sufficient condition for solvability by
its the cardinality of maximal subset of  pairwise non-commuting
elements.

 \begin{thm}
  Every arbitrary group $G$ with $\omega(G)\leq  20$ is solvable and this estimate is sharp.\\
\end{thm}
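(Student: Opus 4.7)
The plan is a two-step reduction followed by a concrete simple-group computation. First, by Theorem~1.1(2) there is a finite group $K$ isoclinic to $G$ with $\omega(K)=\omega(G)\leq 20$. Since isoclinic groups have isomorphic central quotients and solvability is inherited between a group and its central quotient (the centre being abelian), $G$ is solvable iff $K$ is, so I may assume $G$ is finite. Moreover, $\omega$ is monotone under sections: $\omega(H)\leq\omega(G)$ for $H\leq G$ trivially, and $\omega(G/N)\leq\omega(G)$ because any pairwise non-commuting set of cosets $\{g_iN\}$ lifts to a pairwise non-commuting set $\{g_i\}\subseteq G$ of the same size (if $[g_i,g_j]\notin N$, then $[g_i,g_j]\ne 1$). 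Assuming $G$ non-solvable, $G$ has a non-abelian simple composition factor, which by Thompson's classification of minimal simple groups contains a minimal simple subgroup $M$; hence $\omega(M)\leq\omega(G)\leq 20$.

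The heart of the proof is showing $\omega(M)\geq 21$ for every minimal simple $M$. For $M=A_5$ I compute directly: the centralizer of each non-identity element equals the unique Sylow subgroup containing it (Klein four, $C_3$, or $C_5$ according to the prime), and $A_5$ has no element of order $6$, $10$, or $15$. Consequently, elements of distinct prime orders never commute, and elements of the same prime order commute iff they lie in a common Sylow (any abelian $p$-subgroup sitting inside one). Picking one non-identity element from each of the $5$ Sylow $2$-subgroups, the $10$ Sylow $3$-subgroups, and the $6$ Sylow $5$-subgroups yields exactly $5+10+6=21$ pairwise non-commuting elements, and the same centralizer analysis bounds any such set by $21$, so $\omega(A_5)=21$. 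For every other family in Thompson's list---$\mathrm{PSL}(2,p)$ with $p\geq 7$ prime, $\mathrm{PSL}(2,2^p)$, $\mathrm{PSL}(2,3^p)$ with $p$ an odd prime, $\mathrm{PSL}(3,3)$, and $\mathrm{Sz}(2^p)$---a single prime $r$ whose Sylow is cyclic of order $r$ suffices: the number $n_r$ of Sylow $r$-subgroups already exceeds $20$ (for instance $n_3=28$ in $\mathrm{PSL}(2,7)$), and two order-$r$ elements in distinct such Sylows never commute, producing more than $20$ pairwise non-commuting elements. This contradicts $\omega(M)\leq 20$, proving solvability.

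Sharpness is immediate: $A_5$ is non-solvable and $\omega(A_5)=21$, so the threshold cannot be replaced by $21$. The main obstacle is the equality $\omega(A_5)=21$ itself: because the bound is tight precisely at $A_5$, a one-prime Sylow count fails there and one must combine contributions from all three primes while carefully matching the upper and lower bounds. For every other minimal simple group a single Sylow count is enough, so the remaining cases of the classification are routine.
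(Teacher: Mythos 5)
Your reduction to the finite case is exactly the paper's: invoke Theorem~1.1(2) to pass to a finite isoclinic group $K$, and use $G/Z(G)\cong K/Z(K)$ plus the abelianness of the centre to transfer solvability. Where you diverge is in the finite case itself: the paper simply cites the main result of Endimioni's note on groups satisfying the condition $(N,n)$ for the bound $20$, whereas you re-prove that result from scratch via Thompson's classification of minimal simple groups. That is a legitimate and more self-contained route, and your computation $\omega(A_5)=21$ (one element per Sylow subgroup, $5+10+6$, with the centralizer analysis giving the matching upper bound) is correct and is precisely the sharpness example the paper records. One small repair: a non-solvable finite group need not \emph{contain} a minimal simple subgroup; Thompson's theorem gives a minimal simple \emph{section} (a quotient of a minimal non-solvable subgroup by its Frattini subgroup). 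Since you have already established that $\omega$ is monotone under both subgroups and quotients, the argument survives, but the statement should be phrased in terms of sections.

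The genuine gap is in your uniform recipe for the remaining minimal simple groups. You claim that for every member of Thompson's list other than $A_5$ there is a prime $r$ whose Sylow $r$-subgroup is cyclic of order $r$ with $n_r>20$. This fails for $\mathrm{PSL}(2,17)$, which is minimal simple ($17\equiv 2 \pmod 5$) and has order $2448=2^4\cdot 3^2\cdot 17$: the Sylow $2$- and $3$-subgroups have orders $16$ and $9$, and the only prime with Sylow of prime order is $17$, with $n_{17}=18\leq 20$. The theorem is not in danger --- the $136$ self-centralizing cyclic subgroups of order $9$ (the nonsplit tori) already give far more than $20$ pairwise non-commuting elements, and in general one should argue with self-centralizing cyclic subgroups or TI abelian Hall subgroups rather than Sylow subgroups of prime order --- but as written the claim "a single prime $r$ whose Sylow is cyclic of order $r$ suffices" is false, and the assertion that "the remaining cases of the classification are routine" conceals at least one case where the stated method breaks. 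You should either patch the case analysis or do what the paper does and cite the known solvability criterion for finite groups with $\omega\leq 20$.
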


For any group $G$, let $\mathcal{C}(G)$ denote the set of
centralizers of $G$. We say that a group $G$ has $n~centralizers$
($G$ is a $\mathcal{C}_n$-group) if $|\mathcal{C}(G)| = n$.
Finally, we obtain similar results for groups with a finite number
$n$ of centralizers (see Lemma \ref{l2}, Theorem \ref{t32},
Theorem \ref{t34} and also Theorem \ref{t35}, below).

\section{\textbf{Pairwise non-commuting
elements}}

For prove the main results, we need the following Lemma.

Two groups $G$ and $H$ are said to be isoclinic if there are isomorphisms $\varphi:G/Z(G)\rightarrow H/Z(H)  $
 and $\phi: G' \rightarrow H'$ such that
$$\text{if}~ ~\varphi(g_1Z(G)) = h_1Z(H)$$$$
\text{and}~~ \varphi(g_2Z(G)) = h_2Z(H),$$$$
\text{then}~~ \phi([g_1, g_2]) = [h_1, h_2].$$
This concept is weaker than isomorphism and was introduced by P. Hall \cite{Hal} as a
structurally motivated classification for finite groups.
A stem group is defined as a group whose center is contained inside its derived
subgroup. It is known that every group is isoclinic to a stem group and if we restrict
to finite groups, a stem group has the minimum order among all groups isoclinic to it,
see \cite{Hal} for more details.

\begin{lem}\label{l1}
For every two isoclinic groups $G$ and $H$ we have
$\omega(G)=\omega(H)$.
\end{lem}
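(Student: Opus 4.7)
The plan is to push a maximal pairwise non-commuting set of $G$ down to $G/Z(G)$, transport it across the isoclinism to $H/Z(H)$, lift it to representatives in $H$, and check that the resulting set is again pairwise non-commuting of the same cardinality. By symmetry of the isoclinism relation this will give both inequalities $\omega(G)\le\omega(H)$ and $\omega(H)\le\omega(G)$.

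First I would record the key preliminary observation: if $X=\{x_1,\dots,x_n\}\subseteq G$ is a set of pairwise non-commuting elements, then its image in $G/Z(G)$ has exactly $n$ elements. Indeed, if $x_iZ(G)=x_jZ(G)$ with $i\ne j$, then $x_j=x_iz$ for some $z\in Z(G)$, whence $x_i$ and $x_j$ commute, contradicting the hypothesis. So the natural projection $G\to G/Z(G)$ is injective when restricted to $X$.

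Next, let $\varphi:G/Z(G)\to H/Z(H)$ and $\phi:G'\to H'$ be the isomorphisms coming from the isoclinism. Set $\bar x_i:=x_iZ(G)$ and let $\bar y_i:=\varphi(\bar x_i)\in H/Z(H)$; choose any representative $y_i\in H$ of $\bar y_i$. Since $\varphi$ is an isomorphism and the $\bar x_i$ are pairwise distinct, the $\bar y_i$ are pairwise distinct, so in particular the $y_i$ are pairwise distinct. Now apply the defining property of isoclinism to any pair $i\ne j$:
\[
[y_i,y_j]=\phi([x_i,x_j]).
\]
Because $x_i,x_j$ do not commute, $[x_i,x_j]\ne 1$ in $G'$; and since $\phi$ is an isomorphism, $\phi([x_i,x_j])\ne 1$ in $H'$, so $[y_i,y_j]\ne 1$. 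Hence $\{y_1,\dots,y_n\}$ is a set of $n$ pairwise non-commuting elements of $H$, giving $\omega(H)\ge\omega(G)$. Applying the same construction to the inverse isoclinism (with $\varphi^{-1},\phi^{-1}$) yields $\omega(G)\ge\omega(H)$, completing the argument.

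There is essentially no obstacle; the only thing that has to be noticed is that the choice of lifts $y_i$ of $\bar y_i$ is harmless, because the isoclinism identity $[y_i,y_j]=\phi([x_i,x_j])$ depends only on the cosets $y_iZ(H),y_jZ(H)$, and commutation in $H$ is detected by the commutator. Once that point is made, the proof is a three-line diagram chase through the definition of isoclinism.
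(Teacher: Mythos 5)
Your proof is correct and follows essentially the same route as the paper's: both transport a maximal pairwise non-commuting set through the isomorphism $G/Z(G)\to H/Z(H)$, lift to representatives in $H$, and use the compatibility of the isoclinism with the commutator map to see that non-commutation is preserved. The only cosmetic difference is that you argue the contrapositive directly while the paper phrases it as a proof by contradiction.
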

\begin{proof}
Suppose that $G$ and $H$ are two isoclinic groups.

 Therefore, according to P. Hall \cite{Hal}, there exist the commutator maps $$\alpha:G/Z(G)\times G/Z(G)\longrightarrow
 G',~(xZ(G),yZ(G))\mapsto ([x,y])$$ and $$
\alpha':H/Z(H)\times H/Z(H)\longrightarrow H', ~(xZ(H),yZ(H))\mapsto ([x,y])$$ and also isomorphisms $$\beta: G/Z(G)\longrightarrow
 H/Z(H),~\text{and}~~~~
 \gamma: G'\longrightarrow H'$$ such that $$\alpha'(\beta\times \beta)=\gamma(\alpha)$$
 where $$\beta\times \beta:G/Z(G)\times G/Z(G)\longrightarrow H/Z(H)\times H/Z(H).$$

 Now assume that the set $X=\{x_1,x_2,\dots,x_n\}$ is a a maximal subset of pairwise non-commuting
elements of $G$. It follows that $x_iZ(G)\neq x_jZ(G)$ for all
$1\leq i<j\leq n$. Therefore there exist $n$ elements $y_i\in
H\setminus Z(H)$ such that $\beta(x_iZ(G))=y_iZ(H)$. For completes
the proof it is enough to show that the set
$Y=\{y_1,y_2,\dots,y_n\}$ is a a subset of pairwise non-commuting
elements of $H$. Suppose, on the contrary, that there exist
$y_i,y_j\in H$ for some $1\leq i\neq j\leq n$, such that
$[y_i,y_j]=1$. Now, as mentioned above, we obtain that
$$\alpha'(\beta\times
\beta)((x_iZ(G),x_jZ(G)))=\gamma(\alpha)(x_iZ(G),x_jZ(G))$$ and
so $\alpha'(y_iZ(H),y_jZ(H))=\gamma([x_i,x_j])$ and so
$1=[y_i,y_j]=\gamma([x_i,x_j])$. It follows that $[x_i,x_j]=1$, a
contradiction. Thus $\omega(G)=|X|=|Y|\leq \omega(H)$ and so
$\omega(G)\leq \omega(H)$. Similarly, we get $\omega(H)\leq
\omega(G)$ and this completes the proof.
\end{proof}

By the above Lemma we prove Theorem 1.1.\\

 \noindent{\bf{Proof of Theorem 1.1.}}
(1)\; Assume that $G$ is a group with $\omega(G)=n$. According to
the Pyber \cite{Pyb}, there is some constant $c$ such that
$[G:Z(G)]\leq c^{\omega(G)}\leq f(n)$. Therefore, by Schur's
Theorem, the derived subgroup $G'$ is finite and also $|G'|\leq
f(n)^{2f(n)^3}$. Therefore there are finitely many isomorphism
types of $G/Z(G)$ and $G'$ which are bounded above by a function
of $n$. Therefore for every choice of $G/Z(G)$ and $G'$, there are
only finitely many commutator maps from $G/Z(G)\times G/Z(G)$ to
$G'$. It follows, in view of Lemma \ref{l1}, that $G$ is
determined by only
finitely isoclinism types.\\
(2)\; As $\omega(G)=n$, by Pyber \cite{Pyb}, $G$ is a
center-by-finite group. On the other hand, according to the main
Theorem of P. Hall (\cite{Hal}, p. 135), there exists a group $K$
such that $G$ is isoclinic to $K$ and $Z(K)\subseteq [K,K]=K'$. It
follows, as $G$ is isoclinic to $K$, that $K$ is center-by-finite
and so, according to Schur's Theorem, $K'$ is finite. Therefore
$Z(K)$ and $K/Z(K)$ are finite so $K$ is
finite and so Lemma \ref{l1} completes the proof.\\

Now we prove Theorem 1.2.\\

\noindent{\bf{Proof of Theorem 1.2.}} Assume that $G$ is a group
with $\omega(G)\leq 20$. Then according to Theorem 1.1, there
exists a finite group $K$ such that $G$ is isoclinic to $K$ and
$\omega(G)=\omega(K)$. Thus replacing $G$ by the factor group
$G/Z(G)$, it can be assumed without loss of generality that $G$ is
a finite group with $\omega(G)\leq 20$.  But in this case the
result follows from the main result of \cite{End} (note that the
alternating group of degree $5$, $A_5$ is a group with
$\omega(A_5)=21$ and so the estimate is sharp).

\section{\textbf{Groups with a finite number of centralizers} }

It is now appropriate to consider groups with a finite number $n$
of centralizers ($\mathcal{C}_n$-groups), since there exist the
interesting relations between centralizers and pairwise
non-commuting elements. For instance, as mentioned in the
introduction, the finiteness of $\omega(G)$ in $G$ is equivalent
to the finiteness of the factor group $G/Z(G)$. On the other hand,
because of centralizers are subgroups containing the center of the
group, the finiteness of the factor group $G/Z(G)$ follows that
$G$ has finite number of centralizers. Also if $G$ has finite
number of centralizers then it is easy to see that $\omega(G)$ is
finite. Therefore we can summarize the latter results in the
following theorem.
\begin{thm}
For any group G, the following statements are equivalent.
\begin{enumerate}
 \item $G$ has finitely many centralizers.
\item $G$  is a center-by-finite group. \item $G$ has finitely
many of pairwise non-commuting elements.
\end{enumerate}
\end{thm}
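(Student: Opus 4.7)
The plan is to prove the three-way equivalence by a cycle of implications $(3) \Rightarrow (2) \Rightarrow (1) \Rightarrow (3)$, since each individual step is short and all the needed ingredients are already available from the introduction.

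For $(3) \Rightarrow (2)$, I would simply appeal to the theorem of B.H. Neumann \cite{Neu} quoted in the introduction: finiteness of $\omega(G)$ is equivalent to finiteness of $G/Z(G)$, which is the definition of center-by-finite. For $(2) \Rightarrow (1)$, the point to exploit is that every centralizer $C_G(x)$ contains $Z(G)$, so passing to the quotient yields an injection from $\mathcal{C}(G)$ into the set of subgroups of $G/Z(G)$; when $G/Z(G)$ is finite it has only finitely many subgroups, and hence $|\mathcal{C}(G)|<\infty$.

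For $(1) \Rightarrow (3)$, the key observation is that distinct pairwise non-commuting elements necessarily have distinct centralizers: if $x,y\in G$ satisfy $xy\neq yx$, then $x\in C_G(x)\setminus C_G(y)$, so $C_G(x)\neq C_G(y)$. This yields $\omega(G)\leq|\mathcal{C}(G)|$, and the implication follows. There is no real obstacle in any of the three steps; the whole argument is essentially a matter of unpacking what was already said informally in the introduction. The only points that deserve a word of care are checking in step $(2) \Rightarrow (1)$ that the assignment $C_G(x)\mapsto C_G(x)/Z(G)$ is well-defined and injective via the correspondence theorem, and noting in step $(1) \Rightarrow (3)$ that it is the element $x$ itself (and symmetrically $y$) that witnesses the separation of the two centralizers, which is what lets us bound $\omega(G)$ by $|\mathcal{C}(G)|$ rather than by some weaker quantity.
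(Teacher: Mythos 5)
Your proposal is correct and follows essentially the same route as the paper, which establishes $(2)\Leftrightarrow(3)$ via Neumann's theorem, $(2)\Rightarrow(1)$ from the fact that every centralizer contains $Z(G)$, and $(1)\Rightarrow(3)$ from the observation (stated explicitly just after the theorem) that non-commuting elements have distinct centralizers, so that $\omega(G)\leq|\mathcal{C}(G)|$. The only difference is cosmetic: you arrange the implications as a single cycle, while the paper sketches the same three ingredients informally in the paragraph preceding the theorem.
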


  It is
clear that a group is a $\mathcal{C}_1$-group if and only if it is
abelian. The class of $\mathcal{C}_n$-groups was introduced by
Belcastro and Sherman in~\cite{Be} and
investigated by many authors,  for instance see \cite{Ab, Ash, Zar1,Zar4, Zar3}.\\

As every group $G$ with a finite number of centralizers is
center-by-finite and so, by an argument similar to the one in the
proof of Lemma \ref{l1}, we will obtain the following result.

\begin{lem}\label{l2}
For every two isoclinic groups $G$ and $H$ we have
$|\mathcal{C}(G)|=|\mathcal{C}(H)|$.
\end{lem}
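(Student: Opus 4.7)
The plan is to mimic the proof of Lemma \ref{l1}, replacing maximal sets of non-commuting elements with centralizers. Start by recording the same isoclinism data: there are isomorphisms $\beta:G/Z(G)\to H/Z(H)$ and $\gamma:G'\to H'$ with $\gamma([g_1,g_2])=[h_1,h_2]$ whenever $\beta(g_iZ(G))=h_iZ(H)$. The goal is to use $\beta$ to transport centralizers from $G$ to $H$ and obtain an explicit bijection $\mathcal{C}(G)\to\mathcal{C}(H)$.

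The key observation is that the commuting relation $[x,y]=1$ depends only on the cosets $xZ(G)$ and $yZ(G)$: if $y'=yz$ with $z\in Z(G)$, then $[x,y']=[x,z][x,y]^z=[x,y]$, and symmetrically in the other slot. Consequently, writing $\pi_G:G\to G/Z(G)$ for the canonical projection, the centralizer $C_G(x)$ is the full preimage under $\pi_G$ of the subset $\bar{C}(\bar{x}):=\{\bar{y}\in G/Z(G):[x,y]=1\}$, which is a well-defined function of $\bar{x}=xZ(G)$ alone. In particular, $|\mathcal{C}(G)|$ equals the number of distinct subsets $\bar{C}(\bar{x})$ as $\bar{x}$ ranges over $G/Z(G)$, and the analogous statement holds for $H$.

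Because $\gamma$ is an isomorphism, $[x,y]=1$ if and only if $[h,k]=1$ whenever $\beta(\bar{x})=\bar{h}$ and $\beta(\bar{y})=\bar{k}$. Therefore $\beta(\bar{C}(\bar{x}))=\bar{C}(\bar{h})$, and since $\beta$ is a bijection of sets it induces a bijection between the families $\{\bar{C}(\bar{x}):\bar{x}\in G/Z(G)\}$ and $\{\bar{C}(\bar{h}):\bar{h}\in H/Z(H)\}$. Pulling back along $\pi_G$ and $\pi_H$ converts this into the desired bijection $\mathcal{C}(G)\to\mathcal{C}(H)$, yielding $|\mathcal{C}(G)|=|\mathcal{C}(H)|$. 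The main piece of bookkeeping is well-definedness of the transport, namely that two elements $x,x'\in G$ with $C_G(x)=C_G(x')$ produce the same centralizer in $H$; but this is automatic since $\bar{C}(\bar{x})=\pi_G(C_G(x))$ is recovered from $C_G(x)$, so no genuine obstacle arises beyond writing the square of maps carefully.
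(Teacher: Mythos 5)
Your proof is correct and follows essentially the same route as the paper's: both transport centralizers through the isomorphism $\beta:G/Z(G)\to H/Z(H)$, using the isoclinism compatibility with $\gamma$ to see that commuting is preserved, and then pull back along the quotient maps to get a bijection $\mathcal{C}(G)\to\mathcal{C}(H)$. Your write-up is in fact more careful than the paper's rather terse version (which, e.g., writes $K/H$ where $K/Z(H)$ is meant), but the underlying argument is the same.
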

\begin{proof}
 Let $x$ be an element of $G$ and $\beta$ is the
isomorphism $\beta:G/Z(G)\longrightarrow H/Z(H)$. Therefore there
exists a subgroup $K$ of $H$ such that $\beta(C_G(x)/Z(G))=K/H$.
By an argument similar to the one in the proof of Lemma \ref{l1},
we show that there exist an element $y\in K$ such that $K=C_H(y)$
and $yZ(H)=\beta(xZ(G))$. Now as the isomorphism $\beta$ induces
a bijection between the subgroups of $G$ containing $Z(G)$ and the
subgroups of $H$ containing $Z(H)$ the result follows.
\end{proof}

Again, by an argument similar to the one in the proof of Theorems
1.1 we obtain the following result.
\begin{thm}\label{t32}
Let  $n$ be a positive integer and $G$ be an arbitrary
$\mathcal{C}_n$-group. Then
\begin{enumerate}
 \item There are only finitely many
groups $H$, up to isoclinic, with $|\mathcal{C}(H)|=n$; \item
There exists a finite group $K$ such that $K$ is isoclinic to $G$
and $|\mathcal{C}(G)|=|\mathcal{C}(K)|$.
\end{enumerate}
\end{thm}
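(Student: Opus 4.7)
The plan is to mirror the proof of Theorem 1.1, replacing Lemma \ref{l1} with Lemma \ref{l2} and the clique-number bound with a centralizer bound. The crucial bridge that I need first is the elementary estimate $\omega(G) \leq |\mathcal{C}(G)|$: if $\{x_1,\ldots,x_k\}$ is a pairwise non-commuting set, then $x_i \in C_G(x_i)$ while $x_i \notin C_G(x_j)$ for $j \neq i$, so the centralizers $C_G(x_1),\ldots,C_G(x_k)$ are pairwise distinct. In particular, a $\mathcal{C}_n$-group satisfies $\omega(G) \leq n$.

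For part (1), I would start from this observation and invoke Pyber's theorem exactly as in the proof of Theorem 1.1(1) to obtain $[G:Z(G)] \leq c^{\omega(G)} \leq c^n$. Schur's theorem then bounds $|G'|$ by a function of $n$, so only finitely many isomorphism types are possible for each of $G/Z(G)$ and $G'$. Since the set of commutator maps $G/Z(G) \times G/Z(G) \to G'$ is finite once the domain and codomain are fixed, Hall's classification shows that $G$ belongs to one of only finitely many isoclinism classes. Applying Lemma \ref{l2}, each such class contains $\mathcal{C}_n$-groups only (or none at all), so there are only finitely many isoclinism types of $\mathcal{C}_n$-groups.

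For part (2), I would argue exactly as in the proof of Theorem 1.1(2): by Theorem 3.1, the $\mathcal{C}_n$-group $G$ is center-by-finite, and Hall's theorem produces a stem group $K$ isoclinic to $G$ with $Z(K) \subseteq K'$. Isoclinism preserves the quotient $G/Z(G)$ up to isomorphism, so $K$ is also center-by-finite; Schur's theorem then forces $K'$ to be finite, which together with $Z(K) \subseteq K'$ makes $Z(K)$ finite and hence $K$ itself finite. Lemma \ref{l2} supplies $|\mathcal{C}(G)| = |\mathcal{C}(K)|$, finishing the proof.

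The main technical point I expect to be the only real obstacle is the opening inequality $\omega(G) \leq |\mathcal{C}(G)|$; once that is in hand everything else is a direct transcription of the arguments for Theorem 1.1, with Lemma \ref{l2} taking the role of Lemma \ref{l1}. I would not expect any new difficulty coming from Hall's theorem or Schur's theorem, since both are applied in the same form as before.
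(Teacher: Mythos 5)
Your proposal is correct and follows exactly the route the paper intends: the paper's ``proof'' of Theorem \ref{t32} is literally the one-line remark that one argues as in Theorem 1.1 with Lemma \ref{l2} in place of Lemma \ref{l1}, and your bridge inequality $\omega(G)\leq |\mathcal{C}(G)|$ (via the pairwise distinct centralizers of a non-commuting set) is the same observation the paper records just after Theorem \ref{t32} in the sharper form $1+\omega(G)\leq |\mathcal{C}(G)|$. You have in fact supplied more detail than the paper does.
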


For any group $G$, it is easy to see that if $x,y\in G$ and
$xy\neq yx$, then $C_G(x)\neq C_G(y)$, from which it follows
easily that $1+\omega(G)\leq |\mathcal{C}(G)|$ (note that
$C_G(e)=G$, where $e$ is the trivial element of $G$). Thus, by
using Theorem 2.1, we generalize Theorem A of \cite{Zar2}.

 \begin{thm}\label{t34}
  Every arbitrary group $G$ with $|\mathcal{C}(G)|\leq  20$ is solvable and this estimate is sharp.
\end{thm}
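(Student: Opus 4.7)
The plan is to derive the theorem directly from Theorem~1.2 together with the elementary inequality $1+\omega(G)\le |\mathcal{C}(G)|$ already noted in the paragraph just before the statement. This bypasses any structural analysis of $\mathcal{C}_n$-groups, since all the real work sits inside Theorem~1.2. The underlying observation is that whenever $xy\ne yx$ one has $C_G(x)\ne C_G(y)$ (for instance $y$ lies in $C_G(y)\setminus C_G(x)$), so a maximal pairwise non-commuting set $\{x_1,\dots,x_n\}$ produces $n$ pairwise distinct proper centralizers; the additional centralizer $C_G(e)=G$ accounts for the ``$+1$''.

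From the hypothesis $|\mathcal{C}(G)|\le 20$, I would therefore read off $\omega(G)\le 19$, and in particular $\omega(G)\le 20$. Theorem~1.2 then applies and yields at once that $G$ is solvable. This is really the entire forward direction; no appeal to Lemma~\ref{l2}, Theorem~\ref{t32}, or the isoclinism machinery is needed, because all of that is already folded into Theorem~1.2 via its own proof.

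For the sharpness half, I would point to $A_5$, which is already the sharp witness for Theorem~1.2 since $\omega(A_5)=21$. A brief count of its centralizers (the full group, together with the Sylow $2$-, $3$-, and $5$-subgroups, each arising as the centralizer of any of its non-identity elements) exhibits $A_5$ as a small non-solvable $\mathcal{C}_n$-group sitting just beyond the regime guaranteed by the reduction, so the bound obtained by the method cannot be improved without entirely new input.

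I do not anticipate any serious obstacle: once the inequality $1+\omega(G)\le |\mathcal{C}(G)|$ is in hand, the deduction is a one-line reduction to Theorem~1.2, and the sharpness is inherited from the sharpness already established there. The only point worth checking explicitly is the arithmetic gap between $|\mathcal{C}(G)|\le 20$ and $\omega(G)\le 20$, which is absorbed with room to spare by the passage $|\mathcal{C}(G)|\le 20\Rightarrow \omega(G)\le 19$.
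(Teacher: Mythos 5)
Your forward direction is exactly the paper's own (implicit) argument: the paper proves Theorem \ref{t34} precisely by combining the inequality $1+\omega(G)\leq |\mathcal{C}(G)|$, stated in the paragraph immediately preceding the theorem, with the solvability criterion of Theorem 1.2, so that part of your proposal is correct and there is nothing to add.

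The sharpness half, however, has a genuine gap. Counting as you suggest, $A_5$ has $1+5+10+6=22$ element centralizers (the whole group, the five Klein four-subgroups, the ten subgroups of order $3$, and the six subgroups of order $5$), so $A_5$ is a $\mathcal{C}_{22}$-group, not a group "sitting just beyond" the bound $20$. Sharpness of the bound $20$ would require a non-solvable group with exactly $21$ centralizers, and your own reduction shows that no such group can exist: $|\mathcal{C}(G)|\leq 21$ already gives $\omega(G)\leq 20$ and hence solvability by Theorem 1.2. Thus the same one-line argument proves the stronger statement with $21$ in place of $20$, and $A_5$ witnesses sharpness of that improved bound rather than of the bound in the statement. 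To be fair, the paper supplies no justification for its sharpness claim here either, but your appeal to $A_5$ does not establish it, and the discrepancy between $\omega(A_5)=21$ and $|\mathcal{C}(A_5)|=22$ is exactly the point that needs to be addressed.
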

Finally, by using Theorem 3.3 (Case (2)), we generalize the main
results of \cite{Ab, Ash, Ash1, Be} for infinite groups, as follows:
\begin{thm}\label{t35}
  Let $G$ be an arbitrary $\mathcal{C}_n$-group. Then
  \begin{enumerate}
 \item $G/Z(G)\cong C_2\times C_2$ if and only if $n=4$.
\item $G/Z(G)\cong C_3\times C_3$  or  $S_3$ if and only if $n=5$.
\item $G/Z(G)\cong D_8,~A_4, C_2\times C_2\times C_2$ or $C_2\times C_2\times C_2\times C_2$ whenever  $n=6$.
\item $G/Z(G)\cong C_5\times C_5, D_{10}$ or  $\langle x, y| x^5=y^4=1, x^y=x^3 \rangle$ if and only if $n=7$.
\item $G/Z(G)\cong C_2\times C_2\times C_2, A_4$ or $D_{12}$ whenever  $n=8$.
\end{enumerate}
\end{thm}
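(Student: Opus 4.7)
The plan is to reduce the statement to the case of finite groups by invoking Theorem \ref{t32}(2), and then to quote the classifications of finite $\mathcal{C}_n$-groups for $n\in\{4,5,6,7,8\}$ established in \cite{Be, Ash, Ash1, Ab}.

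First I would observe that the pair $(G/Z(G),\,|\mathcal{C}(G)|)$ is an isoclinism invariant: the isomorphism $G/Z(G)\cong H/Z(H)$ is built into the very definition of isoclinism given before Lemma \ref{l1}, and the second coordinate is handled by Lemma \ref{l2}. Now given an arbitrary $\mathcal{C}_n$-group $G$, Theorem \ref{t32}(2) produces a finite group $K$ isoclinic to $G$ with $|\mathcal{C}(K)|=n$, and consequently $K/Z(K)\cong G/Z(G)$. This immediately reduces each forward implication in (1)--(5) to the corresponding assertion for the finite group $K$.

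For the finite case, I would simply cite the existing classifications: \cite{Be} treats $n=4,5$; \cite{Ash} treats $n=6$; \cite{Ash1} treats $n=7$; and \cite{Ab} treats $n=8$. Each of these lists exactly the central quotients appearing in Theorem \ref{t35}, so the ``whenever'' clauses in (3) and (5) and the forward directions of (1), (2), (4) follow directly. For the converse implications in (1), (2), (4), I would exhibit explicit witnesses whose centralizer count is verified by hand: for (1), take $G=D_8$; for (2), the centreless groups $C_3\times C_3$ and $S_3$ themselves serve as witnesses; for (4), the three groups $C_5\times C_5$, $D_{10}$ and $\langle x,y\mid x^5=y^4=1,\,x^y=x^3\rangle$ are $\mathcal{C}_7$-groups with trivial centre. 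An arbitrary infinite $G$ whose central quotient matches one of these models is isoclinic to the corresponding finite witness, so Lemma \ref{l2} transfers the centralizer count $n$ back to $G$.

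The main obstacle is not mathematical but bibliographic: one must check that each of \cite{Be, Ash, Ash1, Ab} actually phrases its classification in terms of the central quotient $K/Z(K)$ (rather than of $K$ itself, which is a strictly finer invariant) and that the lists match Theorem \ref{t35} verbatim, including the reason why (3) and (5) are stated as one-sided implications. Once this matching is confirmed, no further computation is required beyond verifying the small finite witnesses used for the converses in (1), (2), (4).
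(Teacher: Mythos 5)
Your proposal follows essentially the same route as the paper: invoke Theorem \ref{t32}(2) to replace $G$ by a finite isoclinic $\mathcal{C}_n$-group $K$ with $G/Z(G)\cong K/Z(K)$, then quote the finite classifications of \cite{Be, Ash, Ash1, Ab}. The extra care you take with the converse implications (exhibiting finite witnesses and transferring the centralizer count back via Lemma \ref{l2}) is a reasonable elaboration of what the paper leaves implicit, but it is not a different argument.
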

\begin{proof}
For prove it is enough to note that there exists a finite
$\mathcal{C}_n$-group $K$ such that $K$ is isoclinic to $G$ so
$G/Z(G)\cong K/Z(K)$ and so the result follows from the main
results in \cite{Ab, Ash,Ash1,Be}.
\end{proof}

\end{document}